\documentclass[oneside,11pt]{amsart}
\usepackage{amsmath,amsfonts, latexsym,amssymb, times}
\usepackage[active]{srcltx}
\newtheorem{theorem}{Theorem}[section]

\newtheorem{corollary}[theorem]{Corollary}
\newtheorem{proposition}[theorem]{Proposition}
\theoremstyle{definition}

\numberwithin{equation}{section}

 \makeatletter
\renewcommand*\subjclass[2][2000]{%
  \def\@subjclass{#2}%
  \@ifundefined{subjclassname@#1}{%
    \ClassWarning{\@classname}{Unknown edition (#1) of Mathematics
      Subject Classification; using '1991'.}%
  }{%
    \@xp\let\@xp\subjclassname\csname subjclassname@#1\endcsname
  }%
}
 \makeatother
\newcounter{minutes}\setcounter{minutes}{\time}
\divide\time by 60
\newcounter{hours}\setcounter{hours}{\time}
\multiply\time by 60 \addtocounter{minutes}{-\time}

\begin{document}

\title[Superharmonicity of Logarithm of Jacobian of harmonic mappings]{Superharmonicity of Logarithm of Jacobian of harmonic mappings between surfaces }
\author{David Kalaj}
\address{Faculty of Natural Sciences and Mathematics, University of
Montenegro, Cetinjski put b.b. 81000 Podgorica, Montenegro}
\email{davidk@ac.me}


\def\thefootnote{}
\footnotetext{ \texttt{\tiny File:~\jobname.tex,
          printed: \number\year-\number\month-\number\day,
          \thehours.\ifnum\theminutes<10{0}\fi\theminutes }
} \makeatletter\def\thefootnote{\@arabic\c@footnote}\makeatother

\footnote{2010 \emph{Mathematics Subject Classification}: Primary
47B35} \keywords{Superharmonic mappings, Harmonic mappings}
\begin{abstract}
We prove that  the logarithm of the Jacobian of a sense preserving harmonic mapping between surfaces is superharmonic, provided that the Gaussian curvature of the image domain is non-negative.
\end{abstract}
\maketitle
\section{Introduction}

\subsection{Harmonic mappings between Riemann surfaces} Let $(\mathcal{N},\sigma)$ and $(\mathcal{M},\rho)$ be
Riemann surfaces with smooth conformal metrics $\sigma$ and $\rho$, respectively. If
a mapping $f:(\mathcal{N},\sigma)\to(\mathcal{M},\rho)$ is $C^2$, then $f$ is said to be
harmonic (or $\rho$-harmonic) if
\begin{equation}\label{el}
f_{z\overline z}+{(\log \rho^2)}_w\circ f\cdot f_z\,f_{\bar z}=0,
\end{equation}
where $z$ and $w$ are the local parameters on $\mathcal{M}$ and $\mathcal{N}$
respectively (see \cite{jost}). Also $f$ satisfies \eqref{el} if and
only if $f$ is $C^1$ and its Hopf differential
\begin{equation}\label{anal}
\Psi=\rho^2 \circ f \cdot f_z\overline{f_{\bar z}}
\end{equation} is a
holomorphic quadratic differential on $N$. It follows from \eqref{anal}, that (i) every holomorphic mapping is harmonic and (ii) the composition of a harmonic mapping and a holomorphic mapping is harmonic.

From now on, we will assume that $\mathcal{M}$ is parameterized by a complex domain $\Omega$.  Let $\Omega$ be a domain in $\mathbf C$ and $\rho$ be a conformal metric in
$\Omega$. The Gaussian curvature of a double diferentiable metric $\rho$ is given by
$$K_\rho=- \frac{2\Delta\log \rho }{\rho^2 }.$$

 Let $h$ be a solution of  \eqref{el}.

Let $$\partial h= \rho(h(z))h_z,\ \ \ \bar\partial h= \rho(h(z))h_{\bar z},$$ 
 and 
 $$\partial_\sigma h= \frac{\rho(h(z))}{\sigma(z)}h_z, \ \ \ \ \bar\partial_\sigma h= \frac{\rho(h(z))}{\sigma(z)}h_{\bar z},$$and define $$J_h=|\partial h|^2-|\bar\partial h|^2,$$  $$ D_h =|\partial h|^2+|\bar\partial h|^2$$ and $$\Delta f = f_{xx}+f_{yy}.$$
 
 $$J^\sigma_h=|\partial_\sigma h|^2-|\bar\partial_\sigma h|^2$$ and $$\Delta^\sigma f = \frac{1}{\sigma^2}\Delta f.$$ 
Then we have 
\begin{proposition}[Bohner's formulas](See e.g. \cite[p.~113]{jost}). If $h$ is a harmonic mapping then  $$\Delta^\sigma \log |\partial_\sigma h|^2 = K_1-K_2 J^\sigma_h$$ and $$\Delta^\sigma \log |\bar\partial_\sigma h|^2 = K_1+K_2 J^\sigma_h.$$ Here $K_1$ is the Gaussian curvature of $\sigma$ and $K_2$ is the Gaussian curvature of $\rho$.
\end{proposition}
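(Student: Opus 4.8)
The plan is to reduce everything to the flat Laplacian $\Delta=4\partial_z\partial_{\bar z}$ and to exploit the harmonic map equation \eqref{el} in the form $h_{z\bar z}=-L_w(h)\,h_z h_{\bar z}$, where I abbreviate $L=\log\rho^2$ and $L_w(h)$ denotes $(\partial_w L)\circ h$, and similarly for higher derivatives. Writing $K_1=-\tfrac{2\Delta\log\sigma}{\sigma^2}$ and $K_2=-\tfrac{2\Delta\log\rho}{\rho^2}$ (the curvatures in the $z$- and $w$-variables respectively), and using
\[
\log|\partial_\sigma h|^2=\log|\partial h|^2-\log\sigma^2,\qquad \log|\bar\partial_\sigma h|^2=\log|\bar\partial h|^2-\log\sigma^2,
\]
it suffices to compute $\Delta\log|\partial h|^2$ and $\Delta\log|\bar\partial h|^2$; the $\sigma$-term contributes $-\Delta\log\sigma^2=\sigma^2K_1$, and dividing by $\sigma^2$ at the end produces the $K_1$ summand together with the replacement of $J_h/\sigma^2$ by $J_h^\sigma$.

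For the first identity I would expand $\log|\partial h|^2=L(h)+\log h_z+\log\overline{h_z}$ and differentiate each summand in $\bar z$ and then in $z$, using the chain rule $\partial_{\bar z}(L_w(h))=L_{ww}(h)h_{\bar z}+L_{w\bar w}(h)\overline{h_z}$ and its conjugates throughout. The harmonic equation enters through the two clean identities
\[
\partial_{\bar z}\log h_z=\frac{h_{z\bar z}}{h_z}=-L_w(h)\,h_{\bar z},\qquad \partial_z\log\overline{h_z}=\frac{\overline{h_{z\bar z}}}{\overline{h_z}}=-L_{\bar w}(h)\,\overline{h_{\bar z}},
\]
which let me re-express every second derivative of $h$ through first derivatives and $L$. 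The decisive point, which I expect to be the main obstacle, is that after collecting the three contributions every term containing $L_{ww}(h)$, $L_{\bar w\bar w}(h)$, $L_w(h)^2$ or $L_{\bar w}(h)^2$ cancels in pairs, leaving only the mixed second derivative:
\[
\partial_z\partial_{\bar z}\log|\partial h|^2=L_{w\bar w}(h)\bigl(|h_z|^2-|h_{\bar z}|^2\bigr).
\]
Since $L_{w\bar w}=2(\log\rho)_{w\bar w}=-\tfrac14\rho^2K_2$ (in the $w$-variable, evaluated at $h$) and $|h_z|^2-|h_{\bar z}|^2=J_h/\rho^2(h)$, multiplying by $4$ gives $\Delta\log|\partial h|^2=-K_2(h)\,J_h$.

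Finally I would combine this with the $\sigma$-normalization: $\Delta\log|\partial_\sigma h|^2=-K_2(h)J_h+\sigma^2K_1$, so that $\Delta^\sigma\log|\partial_\sigma h|^2=K_1-K_2\,J_h/\sigma^2=K_1-K_2J_h^\sigma$, which is the first Bochner formula. The second formula follows from the entirely parallel expansion $\log|\bar\partial h|^2=L(h)+\log h_{\bar z}+\log\overline{h_{\bar z}}$; here the analogues $\partial_z\log h_{\bar z}=-L_w(h)h_z$ and $\partial_{\bar z}\log\overline{h_{\bar z}}=-L_{\bar w}(h)\overline{h_z}$ produce, after the same cancellation, the mixed term with the opposite sign, $\partial_z\partial_{\bar z}\log|\bar\partial h|^2=L_{w\bar w}(h)(|h_{\bar z}|^2-|h_z|^2)$, whence $\Delta^\sigma\log|\bar\partial_\sigma h|^2=K_1+K_2J_h^\sigma$. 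Throughout, the computation is carried out on the open set where the relevant derivative $h_z$ (respectively $h_{\bar z}$) is non-zero, so that the logarithms are legitimately differentiable and the mixed partials commute.
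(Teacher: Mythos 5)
Your derivation is correct, but there is nothing in the paper to compare it against: the paper states these Bochner formulas as a quoted result, with only a reference to Jost's lecture notes, and never proves them. What you have supplied is therefore a self-contained verification rather than an alternative to the paper's argument. I checked the computation: writing $\log|\partial h|^2=L(h)+\log h_z+\log\overline{h_z}$ with $L=\log\rho^2$, the harmonic map equation \eqref{el} gives $\partial_{\bar z}\log h_z=-L_w(h)h_{\bar z}$ and $\partial_z\log\overline{h_z}=-L_{\bar w}(h)\overline{h_{\bar z}}$, and after applying the remaining derivative the terms in $L_{ww}$, $L_{\bar w\bar w}$, $L_w^2$ and $L_{\bar w}^2$ do cancel in pairs exactly as you claim, leaving $\partial_z\partial_{\bar z}\log|\partial h|^2=L_{w\bar w}(h)\left(|h_z|^2-|h_{\bar z}|^2\right)$; with the paper's normalization $K_2=-2\Delta\log\rho/\rho^2$ this is $-\tfrac14 K_2 J_h$, and the $\sigma$-normalization contributes the $K_1$ term, so both stated identities follow. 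Your computation has the added value of simultaneously establishing the unnormalized Corollary $\Delta\log|\partial h|^2=-K_2J_h$, $\Delta\log|\bar\partial h|^2=K_2J_h$, which is the form actually used in the proof of Theorem~\ref{theo}, and your restriction to the open set where $h_z$ (resp.\ $h_{\bar z}$) is nonzero is consistent with the paper's later appeal to Proposition~\ref{prop2} on isolated zeros. The only caveat worth recording is that the constant in front of $K_2$ depends on the curvature convention: you correctly adopted the paper's (nonstandard, factor-of-two) definition $K_\rho=-2\Delta\log\rho/\rho^2$, and with the more common convention $K=-\Delta\log\rho/\rho^2$ for the metric $\rho|dz|$ the same calculation would produce $-2K_2J_h$ instead; your proof is internally consistent with the paper as written.
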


For our approach it will be useful to use the following direct corollary of Bochner' formulas.
\begin{corollary}
If $h$ is a harmonic mapping then  $$\Delta \log |\partial h|^2 = -K_2 J_h$$ and $$\Delta \log |\bar\partial h|^2 = K_2 J_h.$$
\end{corollary}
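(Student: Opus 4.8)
The plan is to observe that the statement of the corollary is completely independent of the source metric $\sigma$, and then to specialize $\sigma$ so that Bochner's formulas collapse onto the claimed identities. Indeed, the quantities appearing in the corollary, namely $\partial h=\rho(h(z))h_z$, $\bar\partial h=\rho(h(z))h_{\bar z}$, $J_h$, the flat Laplacian $\Delta$, and the curvature $K_2=K_\rho$ of the image metric, all depend only on $h$ and on $\rho$, never on $\sigma$. Moreover the harmonic map equation \eqref{el} itself involves only $\rho$, so harmonicity of $h$ is insensitive to the choice of $\sigma$. This freedom is what I would exploit.

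Concretely, I would choose the flat metric $\sigma\equiv 1$ on the source. Then the normalized derivatives reduce to the unnormalized ones, $\partial_\sigma h=\partial h$ and $\bar\partial_\sigma h=\bar\partial h$, so that $J^\sigma_h=J_h$ and $\Delta^\sigma=\Delta$. Since the Euclidean metric is flat, its Gaussian curvature vanishes, $K_1=0$. Substituting these into the two Bochner formulas of the Proposition gives
$$\Delta\log|\partial h|^2 = K_1-K_2 J^\sigma_h = -K_2 J_h,\qquad \Delta\log|\bar\partial h|^2 = K_1+K_2 J^\sigma_h = K_2 J_h,$$
which are exactly the two asserted identities.

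As a check, and to make the $\sigma$-independence visible at the level of the formulas, I would also run the computation for a general $\sigma$. From $\partial_\sigma h=\sigma^{-1}\partial h$ one gets $\log|\partial_\sigma h|^2=\log|\partial h|^2-\log\sigma^2$ and $J^\sigma_h=\sigma^{-2}J_h$; unwinding $\Delta^\sigma=\sigma^{-2}\Delta$ in Bochner's first formula and multiplying through by $\sigma^2$ yields
$$\Delta\log|\partial h|^2 = \Delta\log\sigma^2 + K_1\sigma^2 - K_2 J_h.$$
The single point requiring care is the curvature identity for the source metric, $K_1=-2\Delta\log\sigma/\sigma^2=-\Delta\log\sigma^2/\sigma^2$, i.e. $\Delta\log\sigma^2=-K_1\sigma^2$; substituting it cancels the two $K_1\sigma^2$ terms and leaves $\Delta\log|\partial h|^2=-K_2 J_h$, with the $\bar\partial$ case identical up to the sign of $K_2 J_h$. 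I do not expect a genuine obstacle here: the content is bookkeeping, and the only subtlety is justifying that one may set $\sigma\equiv 1$, which is precisely the $\sigma$-independence noted above.
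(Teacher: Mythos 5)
Your proposal is correct and takes essentially the same route as the paper, which offers no written proof but calls the statement a ``direct corollary'' of Bochner's formulas: your general-$\sigma$ computation, in which $\Delta\log\sigma^2=-K_1\sigma^2$ cancels the source-curvature term (or, equivalently, the specialization $\sigma\equiv 1$, legitimate since neither the harmonicity equation \eqref{el} nor the quantities $\partial h$, $\bar\partial h$, $J_h$, $K_2$, $\Delta$ involve $\sigma$), is exactly that direct derivation.
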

 
\begin{proposition}\label{prop2} \cite[ p.~10-11]{sy} The functions $|\partial h|$ and $|\bar \partial u|$ are identically zero or they have the isolated zeros with well defined orders.
\end{proposition}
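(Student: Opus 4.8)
The plan is to recognize $a:=\partial h=\rho(h)h_z$ and $b:=\bar\partial h=\rho(h)h_{\bar z}$ as pseudoanalytic functions in the sense of Bers and Vekua, and then to read off the dichotomy from the similarity principle. The whole point is that, although $a$ and $b$ are not holomorphic, the harmonic map equation \eqref{el} forces them to satisfy a first order $\bar\partial$-equation of the form $\partial_{\bar z}a=c\,a$ with a locally bounded coefficient, and any such function factors as a nonvanishing exponential times a genuine holomorphic function.

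First I would differentiate $a$ with respect to $\bar z$. Writing $\rho=\rho(w,\bar w)$ with $w=h$ and using $\partial_{\bar z}\rho(h,\bar h)=\rho_w h_{\bar z}+\rho_{\bar w}\overline{h_z}$ together with \eqref{el} in the form $h_{z\bar z}=-\tfrac{2\rho_w}{\rho}h_z h_{\bar z}$, a direct computation gives
$$\partial_{\bar z}a=-\rho_w h_z h_{\bar z}+\rho_{\bar w}h_z\overline{h_z}.$$
Substituting $h_z=a/\rho$, $h_{\bar z}=b/\rho$ and $\overline{h_z}=\bar a/\rho$ this becomes
$$\partial_{\bar z}a=\Bigl(-\frac{\rho_w b}{\rho^2}+\frac{\rho_{\bar w}\bar a}{\rho^2}\Bigr)a=:c\,a.$$
Since $h$ is $C^1$ (harmonic maps between surfaces are smooth) and $\rho$ is a smooth positive conformal factor, the coefficient $c$ is continuous, hence locally bounded, so $c\in L^p_{\mathrm{loc}}$ for every $p$. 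The same computation applied to $b$ yields $\partial_z b=c'b$ with $c'$ locally bounded, and conjugating gives $\partial_{\bar z}\bar b=\overline{c'}\,\bar b$, an equation of the same type for $\bar b$.

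With the equation $\partial_{\bar z}a=c\,a$ in hand, I would invoke the similarity principle: on any disk a solution of $\partial_{\bar z}a=ca$ with $c\in L^p$, $p>2$, admits the representation $a=e^{s}\Phi$, where $s$ is Hölder continuous and $\Phi$ is holomorphic. One produces $s$ by solving $\partial_{\bar z}s=c$ with the Cauchy (Pompeiu) transform and then checks that $\Phi=e^{-s}a$ satisfies $\partial_{\bar z}\Phi=0$. As $e^{s}$ is nowhere zero, the zero set of $a$ coincides with that of $\Phi$ and each zero of $a$ has the same order as the corresponding zero of $\Phi$. A nonzero holomorphic function has only isolated zeros of well-defined integer order, whereas $\Phi\equiv0$ forces $a\equiv0$; this is precisely the asserted dichotomy for $|\partial h|$. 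Applying the representation to $\bar b$ gives the identical conclusion for $|\bar\partial h|=|\bar b|$.

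The computation in the second step is routine, so the point that needs care is the function-theoretic input. The coefficient $c$ is not prescribed in advance — it depends on the unknowns $a,b$ — so $\partial_{\bar z}a=ca$ is genuinely nonlinear. This is harmless here: the similarity principle requires only that $c$ be a bounded measurable coefficient along the given solution, which is guaranteed by the $C^1$ regularity of $h$ and the positivity of $\rho$. Thus the main obstacle is not conceptual but the verification that $c\in L^p_{\mathrm{loc}}$ with $p>2$, i.e. the appeal to the regularity theory for harmonic mappings; once that is secured, the similarity principle delivers the isolated zeros and their well-defined orders at once.
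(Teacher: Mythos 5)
The paper gives no proof of this proposition, deferring entirely to the citation of Schoen--Yau, and your argument is precisely the standard one found there: $\partial h$ and $\bar\partial h$ satisfy first-order equations $\partial_{\bar z}a=ca$, $\partial_z b=c'b$ with locally bounded coefficients, and the Bers--Vekua similarity principle $a=e^{s}\Phi$ with $\Phi$ holomorphic yields the dichotomy and the well-defined orders. Your computation of $c$ from the harmonic map equation \eqref{el} checks out, so the proposal is correct and follows essentially the same route as the cited source.
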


Let us conclude this introduction with the following observation concerning harmonic maps.

For $g:M \mapsto N$ the energy integral is defined by

\begin{equation}\label{harel} E_\rho[g]=\int_{M}
(|\partial_\sigma g|^2+|\bar \partial_\sigma g|^2) dV_\sigma,
\end{equation}
where $\partial_\sigma g$, and $\bar \partial_\sigma g$ are the partial
derivatives taken with respect to the metrics $\varrho$ and
$\sigma$, and $dV_\sigma$ is the volume element on $(M,\sigma)$.
Assume that energy integral of $h$ is bounded. Then $h$ is harmonic
if and only if $h$ is a critical point of the corresponding
functional where the homotopy class of $h$ is the range of this
functional. For this definition and some important properties of
harmonic maps see \cite{sy}. 
\section{Background and statement of the main result}
Manojlovi\'c in \cite{kojic} proved that a  sense preserving Euclidean harmonic mapping between two planar domains has superharmonic logarithm of Jacobian and used this fact to prove the minimum principle for the Jacobian of Euclidean harmonic mappings between planar domains.  This fact has been used previously by the author in \cite[p.~8]{dist}, but here was not treated the superharmonicity of the logarithm of Jacobian. This fact has been re-discovered by Iwaniec and Onninen in \cite{io} and generalized by Iwaniec, Koski and  Onninen in \cite{iko}. The aim of this note is to obtain a similar results for the  class of harmonic mappings between Riemann surfaces with certain curvature conditions.
We prove the following theorem

\begin{theorem}\label{theo}
If $h$ is $\rho$-harmonic mapping between Riemann surfaces $(D,\sigma)$ and $(\Omega, \rho)$ such that the Gauss curvature of $\rho$ is $K_2$ and $J=J_h = \rho^2(h(z))(|h_z|^2-|h_{\bar z}|^2)$ is the jacobian then \begin{equation}\label{del}-\Delta \log J = K_2 D_h + \frac{4\rho^4}{J^2}\left(\frac{|h_z|^2}{|h_{\bar z}|^2}|B|^2+\frac{|h_{\bar z}|^2}{|h_{ z}|^2}|A|^2-2\Re (A\bar B)\right)\end{equation} where $$A =h_{zz} \bar h_{\bar z}+\bar h_{z\bar z} h_z$$

and $$B=h_{z\bar z} \bar h_z+h_{\bar z} \bar h_{zz},$$ and $$D_h = |\partial h|^2+|\bar \partial h|^2.$$  In particular  $-\log J_u$ is subharmonic, if $h$ is a sense-preserving homeomorphism and $K_2\ge 0$.
\end{theorem}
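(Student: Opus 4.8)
The plan is to reduce the whole statement to the two Bochner identities recorded in the Corollary together with one elementary formula for the Laplacian of a logarithm. Throughout I write $\Delta=4\partial_z\partial_{\bar z}$ and abbreviate $a=|\partial h|^2$ and $b=|\bar\partial h|^2$, so that $J=a-b$ and $D_h=a+b$. By Proposition \ref{prop2} I may work on the open set where $a,b>0$, the isolated zeros being negligible for the final inequality; the degenerate case $b\equiv0$ (i.e. $h$ holomorphic) is handled directly from $\Delta\log a=-K_2 J$. First I would record the pointwise identity, valid for any positive $C^2$ function $F$,
\begin{equation*}
\Delta \log F = \frac{\Delta F}{F} - \frac{4|F_z|^2}{F^2},
\end{equation*}
which follows from $\Delta\log F=4\partial_z(F_{\bar z}/F)$ and $F_{\bar z}=\overline{F_z}$. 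Applying it to $F=a$ and $F=b$ and comparing with the Corollary (this is where harmonicity of $h$ enters) converts the two Bochner identities into expressions for the honest Laplacians,
\begin{equation*}
\Delta a = -K_2 J\, a + \frac{4|a_z|^2}{a}, \qquad \Delta b = K_2 J\, b + \frac{4|b_z|^2}{b}.
\end{equation*}

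Next I would apply the same logarithm identity to $F=J=a-b$. Using $\Delta J=\Delta a-\Delta b$, $J_z=a_z-b_z$, and clearing denominators via $J=a-b$, a short algebraic manipulation collapses the cross terms and gives
\begin{equation*}
-\Delta \log J = K_2 D_h + \frac{4}{J^2}\left(\frac{b}{a}|a_z|^2 + \frac{a}{b}|b_z|^2 - 2\Re(a_z \bar b_z)\right).
\end{equation*}
This already has the shape of \eqref{del}; it remains to identify $a_z$ and $b_z$. Here I would compute the Wirtinger derivatives directly by the chain rule: setting $L=\partial_z\log(\rho^2\circ h)$, a routine calculation using only $\bar h_{z\bar z}=\overline{h_{z\bar z}}$ yields
\begin{equation*}
a_z = \rho^2 A + a L, \qquad b_z = \rho^2 B + b L,
\end{equation*}
with $A$ and $B$ exactly as in the statement; notably the harmonic equation is not needed at this step. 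Substituting these and using $b/a=|h_{\bar z}|^2/|h_z|^2$ and $a/b=|h_z|^2/|h_{\bar z}|^2$, every term carrying $L$ cancels identically inside the quadratic form, leaving precisely \eqref{del}.

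The step I expect to be the main obstacle is purely bookkeeping: evaluating $a_z$ and $b_z$ correctly with the conjugation conventions ($\overline{h_z}=\bar h_{\bar z}$, $\overline{h_{z\bar z}}=\bar h_{z\bar z}$, and so on) and keeping track of the chain-rule factor $L$ arising from $\rho=\rho(h(z))$, then checking that the $L$-terms cancel in $\tfrac{b}{a}|a_z|^2+\tfrac{a}{b}|b_z|^2-2\Re(a_z\bar b_z)$. Everything conceptual (the log-Laplacian identity, the Corollary, and the final inequality) is straightforward; the risk lies in a misplaced conjugate or a dropped term in this algebra.

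Finally, for the "in particular" claim non-negativity is immediate. Since $D_h\ge0$, the term $K_2 D_h\ge0$ when $K_2\ge0$, and by the arithmetic–geometric mean inequality
\begin{equation*}
\frac{|h_z|^2}{|h_{\bar z}|^2}|B|^2 + \frac{|h_{\bar z}|^2}{|h_z|^2}|A|^2 \ge 2|A||B| \ge 2\Re(A\bar B),
\end{equation*}
so the bracket in \eqref{del} is non-negative. Hence $-\Delta\log J\ge0$, that is, $-\log J$ is subharmonic whenever $h$ is a sense-preserving homeomorphism and $K_2\ge0$.
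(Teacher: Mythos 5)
Your proposal is correct and follows essentially the same route as the paper: Bochner's formula for $\Delta\log|\partial h|^2$ and $\Delta\log|\bar\partial h|^2$, the identity $\Delta\log F=\Delta F/F-4|F_z|^2/F^2$ applied to $|\partial h|^2$, $|\bar\partial h|^2$ and $J$, explicit computation of the $z$-derivatives producing the same $A$ and $B$, and the same arithmetic--geometric argument for non-negativity of the bracket. Your organization is somewhat cleaner than the paper's (you isolate the intermediate identity $-\Delta\log J=K_2D_h+\tfrac{4}{J^2}\bigl(\tfrac{b}{a}|a_z|^2+\tfrac{a}{b}|b_z|^2-2\Re(a_z\bar b_z)\bigr)$ before substituting, which makes the cancellation of the $\rho_z$-terms transparent rather than buried in ``easy manipulations''), but the mathematics is the same.
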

\begin{proof}
In the sequel we make use of the formula
\begin{equation}\label{formula}\Delta \log R=\frac{R\Delta R - |\nabla R|^2}{R^2},\end{equation} or what is the same

\begin{equation}\label{formula1}\Delta R =\frac{ |\nabla R|^2}{R}+R\Delta \log R.\end{equation} Here $\nabla R=(R_x,R_y)$ is the Euclidean gradient of a differentiable non-vanishing real function $R$.
Let $J=J_u$. Then by Bochner's formula and  the formula \eqref{formula} for $|\partial h|^2$ and $|\bar\partial h|^2$ we obtain
\[\begin{split}\Delta J &=\Delta|\partial h|^2-\Delta |\bar\partial h|^2  \\&=|\partial h|^{-2} |\nabla |\partial h|^2|^2-|\bar\partial h|^{-2} | \nabla |\bar\partial h|^2|^2 -K_2 J_h D_h,\end{split}\] where $$D_h = |\partial h|^2+|\bar \partial h|^2.$$
Thus
$$\Delta J =   4|\nabla |\partial h||^2-  4| \nabla |\bar\partial h||^2 -K_2 J_h D_h.$$
Hence  \[\begin{split}J^2 \Delta \log J &= J \Delta J - |\nabla J|^2\\&=J( 4|\nabla |\partial h||^2- 4 | \nabla |\bar\partial h||^2 -K_2 J_h D_h)- |\nabla J|^2. \end{split}\]

On the sequel we use the formulas $$|h_z|^2=h_z \bar h_{\bar z}, \ \text{and}, \ \ |h_{\bar z}|^2=h_{\bar z}\bar h_z$$ and for a real function $R$ the formula $$|\nabla R|=2|R_z|.$$
Since 
$$J=|\partial h|^2-|\partial \bar h|^2=\rho^2(|h_z|^2-|h_{\bar z}|^2),$$  we obtain that 
$$|\nabla J|^2=4 |J_z|^2= 4|\left(\rho^2(|h_z|^2-|h_{\bar z}|^2)\right)_z|^2$$
which yields
$$|\nabla J|^2 = 4|\rho^2\left(h_{zz} \bar h_{\bar z}+h_z \bar h _{z \bar z}-h_{z\bar z} \bar h_z - h_{\bar z}\bar h_{zz}\right)+2\rho\rho_z(|h_z|^2-|h_{\bar z}|^2)|^2$$
and $$|\nabla |\partial h||^2=4||\partial h|_z|^2=4|\rho_z |h_z|+\rho|h_z|_z|^2=4\left|\rho_z |h_z|+\frac{\rho}{2|h_z|}(h_{zz}\bar h_{\bar z}+h_z\bar h_{z\bar z})\right|^2,$$

 $$|\nabla |\bar\partial h||^2=4||\bar\partial h|_{ z}|^2=4|\rho_z |h_{\bar z}|+\rho|h_{\bar z}|_z|^2=4\left|\rho_z |h_{\bar z}|+\frac{\rho}{2|h_{\bar z}|}(h_{\bar zz}\bar h_{ z}+h_{\bar z}\bar h_{z z})\right|^2$$
and 
$$( |\nabla |\partial h||^2-  | \nabla |\bar\partial h||^2) = \frac{|\nabla \rho|^2 J}{\rho^2} + {\rho^2}\left(\frac{|h_{zz} \bar h_{\bar z}+\bar h_{z\bar z} h_z|^2}{|h_z|^2}-\frac{|h_{z\bar z} \bar h_z+h_{\bar z} \bar h_{zz}|^2}{|h_{\bar z}|^2}\right) $$ $$ + 4\Re (\rho\rho_z(h_{zz} \bar h_{\bar z}+\bar h_{z\bar z} h_z-h_{z\bar z} \bar h_z-h_{\bar z} \bar h_{zz}) ).$$
Hence
\[ \begin{split}|\nabla J|^2-J \Delta J  &= 4|\rho^2\left(h_{zz} \bar h_{\bar z}+h_z \bar h _{z \bar z}-h_{z\bar z} \bar h_z - h_{\bar z}\bar h_{zz}\right)+2\rho\rho_z(|h_z|^2-|h_{\bar z}|^2)|^2\\&- 4J\bigg(\frac{|\nabla \rho|^2 J}{\rho^2} + {\rho^2}\left(\frac{|h_{zz} \bar h_{\bar z}+\bar h_{z\bar z} h_z|^2}{|h_z|^2}-\frac{|h_{z\bar z} \bar h_z+h_{\bar z} \bar h_{zz}|^2}{|h_{\bar z}|^2}\right) \\& + 4\Re (\rho\rho_{\bar z}(h_{zz} \bar h_{\bar z}+\bar h_{z\bar z} h_z-h_{z\bar z} \bar h_z-h_{\bar z} \bar h_{zz}) )\bigg)\\&  +K_2 J^2 D_h.\end{split}\]

Thus 

\[ \begin{split}|\nabla J|^2-J \Delta J  &= 4|\rho^2\left(A-B\right)+2\rho\rho_z(|h_z|^2-|h_{\bar z}|^2)|^2\\&- 4J\bigg(\frac{|\nabla \rho|^2 J}{\rho^2} + {\rho^2}\left(\frac{|A|^2}{|h_z|^2}-\frac{|B|^2}{|h_{\bar z}|^2}\right) + 4\Re (\rho\rho_{\bar z}(A-B)\bigg)\\&  +K_2 J^2 D_h.\end{split}\]

After some easy manipulations we arrive at the equation

\begin{equation}\label{eqw}|\nabla J|^2-J \Delta J = K_2 J^2 D_h + 4\rho^4\frac{|h_z|^2}{|h_{\bar z}|^2}|B|^2+4\rho^4\frac{|h_{\bar z}|^2}{|h_{ z}|^2}|A|^2-8\rho^4\Re  (A\bar B)\end{equation} where
\underline{}
$$A =h_{zz} \bar h_{\bar z}+\bar h_{z\bar z} h_z$$

and $$B=h_{z\bar z} \bar h_z+h_{\bar z} \bar h_{zz}. $$
From \eqref{eqw} and formula \eqref{formula} we obtain \eqref{del} for the points on the domain where $\partial h$ and $\bar \partial h$ does not vanish.

But Proposition~\ref{prop2} says that the isolated zeros of  $|\partial h|$ and $|\bar \partial h|$ are isolated, and thus \eqref{del} is valid except for a set of isolated points of $D$. 

It remains to observe that \[\begin{split}\frac{|h_z|^2}{|h_{\bar z}|^2}|B|^2&+\frac{|h_{\bar z}|^2}{|h_{ z}|^2}|A|^2-2\Re (A\bar B) \\&=
\left(\frac{|h_z|}{|h_{\bar z}|}|B|-\frac{|h_{\bar z}|}{|h_{ z}|}|A|\right)^2+2(|A| |B|-\Re(A\bar B))\ge 0.\end{split}\]
Since  $\log J_h$ is continuous, we conclude that $-\log J_u$ is subharmonic  under the constraint $K_2 \ge 0.$
\end{proof}
Since a harmonic homeomorphism has the non-vanishing Jacobian (see e.g. \cite{Schulz} or \cite{heinz}), we know that $\log J_u$ is well defined. By maximum principle for subharmonic functions  we obtain
\begin{corollary}
Assume that $h$ is a homeomorphism between surfaces $(D,\sigma)$ and $(S,\rho)$ and let $ \gamma\subset D$ be a  Jordan curve surrounding a domain $\Omega\subset D$. Assume further that the curvature of $S$ is non-negative. Then $$\min\{J_u(z): z\in \Omega\cup \gamma\}=\min_{z\in \gamma} J_u(z).$$
\end{corollary}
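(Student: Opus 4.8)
The plan is to express $\Delta\log J$ through the elementary identity $\Delta\log R=(R\,\Delta R-|\nabla R|^2)/R^2$ for a non-vanishing real $R$, and to inject curvature only through the Corollary to Bochner's formula, which already encodes the harmonic map equation. The first step is to compute $\Delta J$. Writing $J=|\partial h|^2-|\bar\partial h|^2$ and applying the identity in the form $\Delta R=|\nabla R|^2/R+R\,\Delta\log R$ to $R=|\partial h|^2$ and to $R=|\bar\partial h|^2$, the Corollary replaces the two logarithmic Laplacians by $-K_2J_h$ and $+K_2J_h$. Using $|\nabla|\partial h|^2|^2=4|\partial h|^2\,|\nabla|\partial h||^2$, this yields $\Delta J=4|\nabla|\partial h||^2-4|\nabla|\bar\partial h||^2-K_2J_hD_h$.

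Next I would apply the same identity once more, now to $R=J$, so that $J^2\,\Delta\log J=J\,\Delta J-|\nabla J|^2$, and expand the three gradient-squared terms explicitly. Setting $P=|h_z|^2$ and $Q=|h_{\bar z}|^2$, so that $J=\rho^2(P-Q)$, $A=P_z$ and $B=Q_z$, and using $|\nabla R|=2|R_z|$ together with $|\partial h|=\rho\sqrt{P}$ and $|\bar\partial h|=\rho\sqrt{Q}$, I obtain $|\nabla J|^2=4|\rho^2(A-B)+2\rho\rho_z(P-Q)|^2$, while each of $|\nabla|\partial h||^2$ and $|\nabla|\bar\partial h||^2$ splits into a pure metric term in $|\rho_z|^2$, a cross term in $\Re(\rho\rho_z\bar A)$ (resp. $\Re(\rho\rho_z\bar B)$), and a term $\rho^2|A|^2/P$ (resp. $\rho^2|B|^2/Q$). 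Substituting these expressions into $J\,\Delta J-|\nabla J|^2$ is the bulk of the computation.

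I expect the crux to be the cancellation of all metric-derivative contributions. A priori the combination $|\nabla J|^2-J\,\Delta J$ carries the terms $|\nabla\rho|^2J^2/\rho^2$ and $\Re(\rho\rho_{\bar z}(A-B))\,J$, arising both from $|\nabla J|^2$ and from the difference $|\nabla|\partial h||^2-|\nabla|\bar\partial h||^2$; the content of the formula is that these occur with opposite signs and cancel identically. The cancellation uses only that $\rho$ is real, via $2\rho\rho_z(P-Q)=2\rho_zJ/\rho$ and $|\nabla\rho|^2=4|\rho_z|^2$. Tracking signs carefully, what survives is $|\nabla J|^2-J\,\Delta J=K_2J^2D_h+4\rho^4|A-B|^2-4J\rho^2(|A|^2/P-|B|^2/Q)$; inserting $J=\rho^2(P-Q)$ and simplifying collapses the last two terms to $4\rho^4\bigl(\tfrac{P}{Q}|B|^2+\tfrac{Q}{P}|A|^2-2\Re(A\bar B)\bigr)$. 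Dividing by $J^2$ and using $\Delta\log J=(J\,\Delta J-|\nabla J|^2)/J^2$ then gives \eqref{del} at every point where $\partial h$ and $\bar\partial h$ do not vanish.

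To finish, Proposition~\ref{prop2} shows that the zeros of $|\partial h|$ and $|\bar\partial h|$ are isolated, so \eqref{del} holds off a discrete subset of $D$ and extends to all of $D$ by continuity of $\log J$. For the final assertion I would rewrite the surviving bracket as a sum of squares, namely $\left(\frac{|h_z|}{|h_{\bar z}|}|B|-\frac{|h_{\bar z}|}{|h_z|}|A|\right)^2+2\bigl(|A|\,|B|-\Re(A\bar B)\bigr)$, in which the first term is a perfect square and the second is nonnegative because $\Re(A\bar B)\le|A|\,|B|$. Consequently $-\Delta\log J=K_2D_h+(\text{nonnegative})$, which is $\ge0$ as soon as $K_2\ge0$; since $\log J$ is continuous, this makes $-\log J$ subharmonic.
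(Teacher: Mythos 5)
There is a genuine gap here, but it is of an unusual kind: what you have written is a (correct, and essentially identical to the paper's) proof of Theorem~\ref{theo}, i.e.\ of the identity \eqref{del} and the resulting superharmonicity of $\log J$. The statement you were asked to prove, however, is the Corollary asserting the minimum principle $\min\{J_u(z): z\in \Omega\cup\gamma\}=\min_{z\in\gamma}J_u(z)$, and your argument stops exactly where the proof of that Corollary begins. Two ingredients are missing. First, you need to know that $\log J_u$ is well defined and continuous on \emph{all} of $D$, not merely off a discrete set; for this the paper invokes the theorem of Heinz and Schulz that a harmonic homeomorphism has non-vanishing Jacobian. Without this, the subharmonic function $-\log J_u$ could a priori take the value $+\infty$ somewhere in $\Omega$, and the boundary comparison would fail. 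Second, you must actually run the maximum principle: since $-\log J_u$ is subharmonic on $\Omega$ and continuous on $\Omega\cup\gamma$, it attains its maximum on $\gamma$, hence $\log J_u$ attains its minimum on $\gamma$, and the monotonicity of the logarithm transfers this to $J_u$ itself. Neither step is difficult, but neither appears in your write-up, so as it stands the proposal does not establish the stated conclusion.

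For what it is worth, the computational portion you did carry out follows the paper's route for the Theorem step by step: the same use of $\Delta\log R=(R\Delta R-|\nabla R|^2)/R^2$, the same substitution of Bochner's formula, the same identification $A=(|h_z|^2)_z$, $B=(|h_{\bar z}|^2)_z$, the same cancellation of the metric-derivative terms, and the same final sum-of-squares decomposition. So the fix is simply to append the two missing steps above rather than to redo anything.
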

\begin{corollary}
Let $h$ be as in Theorem~\ref{theo} and let $J_h^0$ be the Euclidean Jacobian of $h$. Let $r(x,y)=|h(x,y)|$ and assume that $\rho$ is rotationally symmetric. Then \[\begin{split}-\Delta \log J^0&=K_2 \rho^2(D^0_h-|\nabla r|^2) +\frac{2\rho'}{\rho r}(r\Delta r-|\nabla r|^2)\\&+ \frac{4\rho^4}{J^2}\left(\frac{|h_z|^2}{|h_{\bar z}|^2}|B|^2+\frac{|h_{\bar z}|^2}{|h_{ z}|^2}|A|^2-2\Re (A\bar B)\right),\end{split}\] where $A$ and $B$ are as in  Theorem~\ref{theo}  and $D^0_h=|h_z|^2+|h_{\bar z}|^2$.

\end{corollary}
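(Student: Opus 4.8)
The plan is to reduce the statement to Theorem~\ref{theo} by exploiting the multiplicative relation between the conformal and the Euclidean Jacobians. Since $J=J_h=\rho^2(h(z))J_h^0$ and likewise $D_h=\rho^2(h(z))D_h^0$, taking logarithms gives $\log J^0=\log J-2\log(\rho\circ h)$, so that
$$-\Delta\log J^0=-\Delta\log J+2\Delta\big(\log(\rho\circ h)\big).$$
The first term on the right is supplied verbatim by \eqref{del}, and after substituting $D_h=\rho^2 D_h^0$ it already produces the summand $K_2\rho^2 D_h^0$ together with the quadratic (in $A,B$) bracket of the asserted identity. Thus everything reduces to computing $\Delta(\log(\rho\circ h))$ and recognizing that $2\Delta(\log(\rho\circ h))$ must contribute exactly $-K_2\rho^2|\nabla r|^2+\tfrac{2\rho'}{\rho r}(r\Delta r-|\nabla r|^2)$.

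Next I would treat $\log(\rho\circ h)$ as the composition of the scalar function $\phi:=\log\rho$ with the real-valued function $r=|h(x,y)|$ on the domain. Because $\rho$ is rotationally symmetric, it depends on the image point only through $r$, so $\log(\rho\circ h)=\phi(r)$ with $r$ a function on $D$. The elementary chain rule for a scalar function of $r$ then yields
$$\Delta\big(\log(\rho\circ h)\big)=\phi''(r)\,|\nabla r|^2+\phi'(r)\,\Delta r=(\log\rho)''|\nabla r|^2+\frac{\rho'}{\rho}\,\Delta r,$$
where the primes denote differentiation in $r$. Here $\nabla r$ and $\Delta r$ are the ordinary Euclidean gradient and Laplacian on $D$, so no conformal factor coming from $h$ intervenes at this stage.

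Finally I would invoke the rotational symmetry once more through the curvature. For a radial conformal factor the curvature identity $K_\rho=-2\Delta\log\rho/\rho^2$ specializes, via the planar radial Laplacian $\Delta f(r)=f''+\tfrac1r f'$, to
$$K_2\rho^2=-2\Big((\log\rho)''+\tfrac1r(\log\rho)'\Big),\qquad\text{i.e.}\qquad (\log\rho)''=-\tfrac12 K_2\rho^2-\frac{\rho'}{\rho r}.$$
Substituting this into the expression for $\Delta(\log(\rho\circ h))$ and multiplying by $2$ gives
$$2\Delta\big(\log(\rho\circ h)\big)=-K_2\rho^2|\nabla r|^2+\frac{2\rho'}{\rho r}\big(r\Delta r-|\nabla r|^2\big),$$
and combining this with \eqref{del} and the regrouping $K_2\rho^2 D_h^0-K_2\rho^2|\nabla r|^2=K_2\rho^2(D_h^0-|\nabla r|^2)$ yields the claimed formula. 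The only delicate point is the bookkeeping in this last step: the $\tfrac1r$ term in the radial Laplacian is precisely what converts the naive chain-rule output into the symmetric combination $r\Delta r-|\nabla r|^2$, so the curvature substitution must be carried out before regrouping. Everything else is routine differentiation.
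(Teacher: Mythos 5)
Your proposal is correct and follows essentially the same route as the paper: split off $-\Delta\log J^{0}=-\Delta\log J+\Delta\log(\rho\circ h)^{2}$, expand $\Delta\log\rho(r)^{2}$ by the chain rule in $|\nabla r|^{2}$ and $\Delta r$, and eliminate $(\log\rho)''$ via the radial curvature identity $K_{2}\rho^{2}=-2\bigl((\log\rho)''+\tfrac1r(\log\rho)'\bigr)$ before regrouping. The only difference is cosmetic (the paper solves for $\rho\rho''-\rho'^{2}$ rather than for $(\log\rho)''$), so no further comment is needed.
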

\begin{proof}
Since $J=\rho^2 J^0$, and $$-\Delta \log J=-\Delta \log \rho^2-\Delta \log J^0,$$ we obtain that $$-\Delta \log J^0=\Delta \log \rho^2-\Delta \log J^0.$$ So from \eqref{del} we obtain

For rotationally symmetric metric $\rho(z)=\rho(r)$, where $r=|z|$,  we have
$$K_2=-\frac{\Delta\log \rho(r)^2}{\rho(r)^2}=-\frac{2 \left(\rho(r) \rho''(r)- \rho'(r)^2+\rho(r) \rho'(r)/r\right)}{ \rho(r)^4}.$$

$$ \rho(r) \rho''(r)- \rho'(r)^2=-\rho(r) \rho'(r)/r+ \rho(r)^4 K_2/(-2)$$

Now if $r(x,y)=|h(x,y)|$ we obtain $${\Delta\log \rho(r)^2}=\frac{2\left((\rho \rho''-(\rho')^2)|\nabla r|^2+\rho \rho '\Delta r \right)}{\rho^2}.$$ Thus

$${\Delta\log \rho(r)^2}=\frac{\left((-\frac{2\rho \rho'}{r}-{K_2\rho^4})|\nabla r|^2+2\rho \rho '\Delta r \right)}{\rho^2}.$$
Hence
\[\begin{split}-\Delta \log J^0&=-\Delta \log J-K_2\rho^2\\&=K_2 \rho^2(D^0_h-|\nabla r|^2) +\frac{2\rho'}{\rho r}(r\Delta r-|\nabla r|^2)\\&+ \frac{4\rho^4}{J^2}\left(\frac{|h_z|^2}{|h_{\bar z}|^2}|B|^2+\frac{|h_{\bar z}|^2}{|h_{ z}|^2}|A|^2-2\Re (A\bar B)\right).\end{split}\]

\end{proof}

\end{document}